\newcommand{\SSS}{\mathfrak{S}}
\newcommand{\BBB}{\mathfrak{B}}
\newcommand{\AAA}{\mathcal{A}}
\newcommand{\eul}{\mathscr{A}}
\newcommand{\eulb}{\mathscr{B}}
\DeclareMathOperator{\DES}{DES}
\DeclareMathOperator{\EXC}{EXC}
\DeclareMathOperator{\des}{des}
\DeclareMathOperator{\exc}{exc}
\theoremstyle{plain}
\newtheorem{theorem}{Theorem}
\newtheorem{corollary}[theorem]{Corollary}
\newtheorem{lemma}[theorem]{Lemma}
\theoremstyle{definition}
\newtheorem{definition}[theorem]{Definition}
\newtheorem{conjecture}[theorem]{Conjecture}
\theoremstyle{remark}
\newtheorem{remark}[theorem]{Remark}
\title{Synchronicity of descent and excedance enumerators in the alternating subgroup}
\author[1]{Umesh Shankar\thanks{\tt{204093001@iitb.ac.in, umeshshankar@outlook.com}}} 
\affil[1]{Department of Mathematics, Indian Institute of Technology, Bombay Mumbai 400076, India} 
\date{\today}
\begin{document}
\maketitle
\begin{abstract}
Generalising the work of Dey, we define the notion of ultra-synchronicity of sequences of real numbers. Let $B_{n,k},C_{n,k},P_{n,k},Q_{n,k}$ be the number of even permutations with $k$ descents, odd permutations with $k$ descents, even permutations with $k$ excedances and odd permutations with $k$ excedances respectively. We show that the four sequences are ultra-synchronised for all $n\ge 5$. This proves a strengthening of two conjectures of Dey.
\end{abstract}
\textbf{\small{}Keyword:}{\small{} alternating group, real-rootedness, descent, excedance, eulerian numbers, log-concavity }{\let\thefootnote\relax\footnotetext{The author is supported by the National Board for Higher Mathematics, India.}}{\let\thefootnote\relax\footnotetext{2020 \textit{Mathematics Subject Classification}. Primary: 05A05, 05A20.}}

\section{Introduction}
A sequence $(a_k)_{k=1}^n$ of reals is said to be log-concave if for each $2\le i\le n-1$, we have $a_{i}^2\ge a_{i+1}a_{i-1}$. We refer the reader to the surveys of Stanley \cite{stanley-survey-logcon} and Branden \cite{branden-unimodality_log_concavity} that showcase the ubiquity of these sequences in combinatorics and geometry and the variety of methods developed to prove log-concavity.  
For a positive integer $n$, denote by $[n]$ the set $\{1,2,\dots,n\}$ and let $\SSS_n$ be the set of permutations on $[n]$. For $\pi=\pi_1\pi_2\dots \pi_n$, define its descent set as $\DES(\pi)=\{i\in [n-1]: \pi_i>\pi_{i+1}\}$ and the descent number as $\des(\pi)=|\DES(\pi)|$. Define the set of excedances as $\EXC(\pi)=\{i\in [n]: \pi_i>i \}$ and the number of excedances as $\exc(\pi)=|\EXC(\pi)|$. We denote the number of permutations in $\SSS_n$ with $k$ descents by $\eul(n,k)$ and the number of permutations in $\SSS_n$ with $k$ excedances by $E(n,k)$. It is well known result of MacMahon \cite{macmahon-book} that the descents and excedances are equidistributed over $\SSS_n$, i.e., $\eul(n,k)=E(n,k)$. Define the Eulerian polynomial, $A_n(t)$, by
\begin{equation}
    A_n(t)=\sum_{\pi\in \SSS_n} t^{\des(\pi)}=\sum_{k=0}^{n-1} \eul(n,k)t^k.
\end{equation}
The Eulerian polynomial $A_n(t)$ enjoys a lot of interesting properties. For example, it is real rooted for all $n$ and the coefficients $\eul(n,k)$ are log-concave.
Let $\AAA_n\subset \SSS_n$ be the set of even permutations. Let $B_{n,k}$ and $C_{n,k}$ be the number of even and odd permutations in $\SSS_n$ with $k$ descents respectively. The difference $$D_{n,k}=B_{n,k}-C_{n,k}$$
were called the signed Eulerian numbers in $\cite{foata-desarm_loday}$.
We have the following recurrences for the numbers $D_{n,k}$ due to Tanimoto \cite{tanimoto-signed}.

\begin{theorem} The signed Eulerian numbers satisfy
\[D_{n,k}=
    \begin{cases}
        (n-k)D_{n-1,k-1}+(k+1)D_{n-1,k} & \mbox{if } n \mbox{ odd}\\
        D_{n-1,k}-D_{n-1,k-1} & \mbox{if } n \mbox{ even}\\        
    \end{cases}\]
\end{theorem}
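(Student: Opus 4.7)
My plan is the classical insertion argument with careful sign tracking. Every $\pi \in \SSS_n$ arises uniquely as the insertion of $n$ at some position $j \in [n]$ into a $\sigma \in \SSS_{n-1}$; since placing $n$ at position $j$ introduces exactly $n - j$ new inversions, $\mathrm{sgn}(\pi) = (-1)^{n-j} \mathrm{sgn}(\sigma)$. A short case analysis shows $\des(\pi) = \des(\sigma)$ when $j = n$ or $j - 1 \in \DES(\sigma)$, and $\des(\pi) = \des(\sigma) + 1$ otherwise.

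First I would fix $k$ and split $D_{n,k} = \sum_{\pi : \des(\pi) = k} \mathrm{sgn}(\pi)$ by the source $\sigma$. The insertion positions $j$ that give $\des(\pi) = k$ partition into two groups per $\sigma$: descent-preserving (from $\sigma$ with $\des(\sigma) = k$) and descent-increasing (from $\sigma$ with $\des(\sigma) = k - 1$). Summing $(-1)^{n-j}$ over each group yields an alternating sum over descent or ascent positions of $\sigma$; this collapses, using the elementary identity $\sum_{i=1}^{n-2}(-1)^i \in \{0, -1\}$ (according to $n \bmod 2$), to a linear expression in the statistic $S_\sigma := \sum_{i \in \DES(\sigma)}(-1)^i$. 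Writing $D_m(t) := \sum_k D_{m,k} t^k$ and $T_m(t) := \sum_{\sigma \in \SSS_m} \mathrm{sgn}(\sigma) t^{\des(\sigma)} S_\sigma$, the assembly produces the generating-function identities $D_n(t) = (1-t)\bigl(D_{n-1}(t) - T_{n-1}(t)\bigr)$ for $n$ even and $D_n(t) = D_{n-1}(t) + (1-t) T_{n-1}(t)$ for $n$ odd.

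Each case now reduces to an identity for $T_m$. For $n$ even, the desired $D_{n,k} = D_{n-1,k} - D_{n-1,k-1}$ is equivalent to $T_{n-1}(t) \equiv 0$ when $n - 1$ is odd. For $n$ odd, passing through the differential form $A_n(t) = (1+(n-1)t) A_{n-1}(t) + t(1-t) A'_{n-1}(t)$ of the classical Eulerian recurrence, the desired $D_{n,k} = (n-k) D_{n-1,k-1} + (k+1) D_{n-1,k}$ becomes equivalent to the explicit formula $(1-t) T_{n-1}(t) = (n-1) t\, D_{n-1}(t) + t(1-t) D'_{n-1}(t)$ when $n - 1$ is even.

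The main obstacle is proving these two identities for $T_m$. I would attempt both via a sign-reversing involution on $\SSS_m$: given $\sigma$, locate a canonical pair of adjacent entries whose swap flips $\mathrm{sgn}(\sigma)$ while controllably perturbing $\des(\sigma)$ and $S_\sigma$ — for instance, the smallest $i$ with $\{\sigma_i, \sigma_{i+1}\} = \{2\ell - 1, 2\ell\}$ for some $\ell$. Such a swap alters descents only at position $i$ (neighbours of $\{\sigma_i, \sigma_{i+1}\}$ cannot lie strictly between two consecutive integers), so $\des(\sigma)$ and $S_\sigma$ transform predictably; pairing contributions from $\sigma$ and its swap should give the required cancellations, modulo a residual over fixed points admitting no such adjacent pair. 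Resolving that fixed-point residual — showing it vanishes for odd $m$ and matches the explicit right-hand side for even $m$ — is the real technical core, and likely requires a secondary symmetry (say, a reversal-type argument exploiting the parity of $m$). Once the two $T_m$ identities are in hand, extracting coefficients of $t^k$ from the generating-function relations immediately yields both cases of Tanimoto's theorem.
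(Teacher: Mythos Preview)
The paper does not prove this statement at all: it is quoted as a known result of Tanimoto and used as input. So there is no ``paper's proof'' to compare against; what I can assess is whether your argument stands on its own.

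Your reduction is correct. The insertion bookkeeping is right --- inserting $n$ at position $j$ multiplies the sign by $(-1)^{n-j}$, preserves $\des$ exactly when $j=n$ or $j-1\in\DES(\sigma)$, and increments it otherwise --- and summing $(-1)^{n-j}$ over the two position sets does yield
\[
D_n(t)=(1-t)\bigl(D_{n-1}(t)-T_{n-1}(t)\bigr)\quad(n\text{ even}),\qquad
D_n(t)=D_{n-1}(t)+(1-t)T_{n-1}(t)\quad(n\text{ odd}),
\]
with $T_m(t)=\sum_{\sigma\in\SSS_m}\mathrm{sgn}(\sigma)\,t^{\des(\sigma)}S_\sigma$. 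The translation of the two cases of the theorem into the two claimed identities for $T_{n-1}$ is also accurate.

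The genuine gap is exactly where you locate it: you never prove the $T_m$ identities, and the involution you propose does not do the job as stated. Swapping an adjacent pair $\{\sigma_i,\sigma_{i+1}\}=\{2\ell-1,2\ell\}$ flips the sign, but it also toggles the descent at position $i$, so \emph{both} $\des(\sigma)$ and $S_\sigma$ change. Concretely, the paired contribution to $T_m$ is
\[
\mathrm{sgn}(\sigma)\,t^{\des(\sigma)}S_\sigma-\mathrm{sgn}(\sigma)\,t^{\des(\sigma)\mp 1}\bigl(S_\sigma\mp(-1)^i\bigr),
\]
which is not zero; there is no cancellation inside $T_m$, only a reshuffling into a different (and no simpler) weighted sum. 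So the ``residual over fixed points'' is not the only obstruction --- the non-fixed pairs already fail to cancel. The vague appeal to ``a secondary symmetry (say, a reversal-type argument)'' is not a proof: reversal on $\SSS_m$ sends $\des$ to $m-1-\des$ and, for $m$ odd, preserves $S_\sigma$, while changing sign by $(-1)^{\binom{m}{2}}$, which again does not force $T_m=0$. In short, the heart of the theorem has been relocated, not resolved.

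If you want a route that actually closes, it is cleaner to go through the Foata--D\'esarm\'enien closed form $D_n(t)=(1-t)^{\lfloor n/2\rfloor}A_{\lceil n/2\rceil}(t)$: the even case is then the trivial identity $D_n(t)=(1-t)D_{n-1}(t)$, and the odd case follows from the differential recurrence $A_{m+1}(t)=(1+mt)A_m(t)+t(1-t)A_m'(t)$ by a two-line computation. Proving that closed form (e.g.\ by the D\'esarm\'enien involution) is where the real combinatorics lives, and it is not equivalent to the swap you sketched.
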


Similarly, one can define an excedance variant of the signed Eulerian numbers. Let $P_{n,k}$ and $Q_{n,k}$
be number of even and odd permutations in $\SSS_n$ with $k$ excedances respectively. We have the following theorem that was proven by Mantaci \cite{mantaci-jcta-93} and later by Sivasubramanian \cite{siva-exc-det} using determinants of suitably defined matrices. 
\begin{theorem}
    For positive integers $n$ and $0\le k\le n-1$, we have 
    \[
    P_{n,k}-Q_{n,k}=(-1)^k\binom{n-1}{k}.
    \]
\end{theorem}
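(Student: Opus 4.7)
The plan is to recast the claim as the single generating-function identity
\[
\sum_{\pi \in \SSS_n} \mathrm{sgn}(\pi)\, t^{\exc(\pi)} \;=\; (1-t)^{n-1},
\]
whose left-hand side equals $\sum_k (P_{n,k}-Q_{n,k})\,t^k$. Extracting the coefficient of $t^k$ then gives $P_{n,k}-Q_{n,k} = (-1)^k\binom{n-1}{k}$.

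To prove the identity I would recognise the signed excedance enumerator as a determinant. Let $M_n$ be the $n\times n$ matrix with entries $(M_n)_{ij}=t$ when $j>i$ and $(M_n)_{ij}=1$ when $j\le i$. By the Leibniz formula,
\[
\det(M_n) \;=\; \sum_{\pi\in\SSS_n} \mathrm{sgn}(\pi)\prod_{i=1}^{n} (M_n)_{i,\pi(i)},
\]
and each factor $(M_n)_{i,\pi(i)}$ contributes $t$ precisely when $\pi(i)>i$, i.e.\ when $i$ is an excedance of $\pi$. Hence $\det(M_n)$ is exactly the signed excedance enumerator.

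Computing $\det(M_n)$ is then mechanical. Subtracting row $i+1$ from row $i$ for each $i=1,\dots,n-1$ collapses every such row to a single nonzero entry $t-1$ sitting in column $i+1$, while the bottom row stays $(1,1,\dots,1)$. Expanding down column $1$ picks up only the bottom-left $1$, reducing the computation to the determinant of a diagonal $(n-1)\times(n-1)$ matrix with diagonal entries $t-1$. This yields $\det(M_n)=(-1)^{n+1}(t-1)^{n-1}=(1-t)^{n-1}$, as required.

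The only real obstacle is spotting the determinantal representation; once it is in hand, the row reduction and coefficient extraction are routine. A natural alternative would be to construct an explicit sign-reversing involution on $\{\pi\in\SSS_n:\exc(\pi)=k\}$ whose fixed-point set has cardinality $\binom{n-1}{k}$, but the determinant route is both shorter and conceptually cleaner.
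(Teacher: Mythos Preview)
Your argument is correct. The paper itself does not supply a proof of this theorem; it only cites Mantaci and Sivasubramanian, the latter explicitly for a proof ``using determinants of suitably defined matrices.'' Your determinantal computation---recognising $\sum_{\pi}\mathrm{sgn}(\pi)\,t^{\exc(\pi)}$ as $\det(M_n)$ via the Leibniz expansion and then row-reducing to obtain $(1-t)^{n-1}$---is precisely Sivasubramanian's approach, so you have reconstructed the proof the paper points to rather than diverged from it.
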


To show that the sequences $(P_{n,k})_{k=0}^{n-1}$ and $(Q_{n,k})_{k=0}^{n-1}$ are log-concave, Dey \cite{dey-exc-logcon-altgrp} defined the notion of strong synchronisation of sequences.
\begin{definition}
    Two sequences $(X_{k})_{k=0}^{n}$ and $(Y_{k})_{k=0}^{n}$ are said to be strongly synchronised if for $1\le i \le n-1$, we have
    \begin{equation}\label{eqn:strong-sync}
    \min(X_{i},Y_{i})^2\ge \max(X_{i+1},Y_{i+1})\max(X_{i-1},Y_{i-1}),
\end{equation}
\end{definition}
First, we extend this notion of strong synchronisation to the notion of ultra synchronisation.
\begin{definition}
    Two sequences $(X_{k})_{k=0}^{n}$ and $(Y_{k})_{k=0}^{n}$ are said to be ultra-synchronised if for $1\le i \le n-1$, we have
    \begin{equation}\label{eqn:ultra-sync}
    \frac{\min(X_{i},Y_{i})^2}{\binom{n}{i}^2}\ge \frac{\max(X_{i+1},Y_{i+1})}{\binom{n}{i+1}}\frac{\max(X_{i-1},Y_{i-1})}{\binom{n}{i-1}},
\end{equation}
\end{definition}
We extend this notion to multiple sequences $(X^{(j)}_{k})_{k=0}^n$ where $1\le j \le m$.
\begin{definition}
    The sequences $(X^{(j)}_{k})_{k=0}^n$, where $1\le j \le m$, are said to be ultra-synchronised at an index $i$, where $1\le i\le n-1$, if 
    \begin{equation}\label{eqn:multiple-ultra-sync}
         \frac{ \min(X^{(1)}_{i},\dots,X^{(m)}_{i})^2}{\binom{n}{i}^2} \ge \frac{\max(X^{(1)}_{i+1},\dots,X^{(m)}_{i+1})}{\binom{n}{i+1}}\frac{\max(X^{(1)}_{i-1},\dots,X^{(m)}_{i-1})}{\binom{n}{i-1}}  
    \end{equation}
\end{definition}
We call the sequences $(X^{(j)}_{k})_{k=0}^n$, where $1\le j \le m$, ultra-synchronised if they are ultra-synchronised at each index $i$, where $1\le i\le n-1$.
The main result of this paper is the following.
\begin{theorem}\label{thm: main}
   For a positive integer $n \ge 5$, the four sequences $$(B_{n,k})_{k=0}^{n-1}, (C_{n,k})_{k=0}^{n-1},(P_{n,k})_{k=0}^{n-1}, (Q_{n,k})_{k=0}^{n-1}$$ are ultra-synchronised.     
\end{theorem}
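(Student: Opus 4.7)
The plan is to reduce the four-sequence ultra-synchronisation to a single scalar inequality, and then to combine real-rootedness of the Eulerian polynomial with quantitative bounds on the two signed correction terms.

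First, the identities $B_{n,k}+C_{n,k}=\eul(n,k)=P_{n,k}+Q_{n,k}$ together with $B_{n,k}-C_{n,k}=D_{n,k}$ and the Mantaci--Sivasubramanian formula $P_{n,k}-Q_{n,k}=(-1)^{k}\binom{n-1}{k}$ force all four quantities into the interval $\bigl[\tfrac{1}{2}(\eul(n,k)-M_k),\,\tfrac{1}{2}(\eul(n,k)+M_k)\bigr]$, with both endpoints attained, where $M_k:=\max\bigl\{|D_{n,k}|,\binom{n-1}{k}\bigr\}$. Hence for every $1\le i\le n-2$ the inequality \eqref{eqn:multiple-ultra-sync} with $m=4$ collapses to the single scalar inequality
\[
\frac{(\eul(n,i)-M_i)^{2}}{\binom{n}{i}^{2}}\;\ge\;\frac{(\eul(n,i+1)+M_{i+1})(\eul(n,i-1)+M_{i-1})}{\binom{n}{i+1}\binom{n}{i-1}}.
\]

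Second, writing $\widetilde{e}_k:=\eul(n,k)/\binom{n}{k}$ and $\widetilde{m}_k:=M_k/\binom{n}{k}$, the real-rootedness of $A_n(t)$ combined with Newton's inequalities gives the ultra-log-concavity $\widetilde{e}_i^{\,2}\ge \widetilde{e}_{i+1}\widetilde{e}_{i-1}$, producing a nonnegative ``Eulerian slack'' $\widetilde{e}_i^{\,2}-\widetilde{e}_{i+1}\widetilde{e}_{i-1}$. Expanding the displayed inequality, it suffices to show this slack dominates the perturbation
\[
2\widetilde{e}_i\widetilde{m}_i+\widetilde{e}_{i+1}\widetilde{m}_{i-1}+\widetilde{e}_{i-1}\widetilde{m}_{i+1}+\widetilde{m}_{i+1}\widetilde{m}_{i-1}-\widetilde{m}_i^{\,2}.
\]
For this I would bound the two constituents of $M_k$ separately. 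The binomial piece obeys $\binom{n-1}{k}/\binom{n}{k}=(n-k)/n\le 1$, while for $|D_{n,k}|$ Tanimoto's recurrences yield an inductive bound: the even-$n$ factorisation $D_n(t)=(1-t)D_{n-1}(t)$ is strongly regularising, and the odd-$n$ recurrence has the same shape as that of $\eul(n,k)$, so that $|D_{n,k}|$ can be compared directly with $\binom{n-1}{k}$ or with $\eul(n-1,k)$.

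The main obstacle is the quantitative balance at the boundary indices $i=1$ and $i=n-2$, where $\eul(n,i)$ is still large in absolute terms but the ratio $\widetilde{m}_i/\widetilde{e}_i$ is greatest and the Eulerian slack is tightest relative to the perturbation. A direct check already shows that ultra-synchronisation fails for $n=4$ at $i=1$, which explains why the hypothesis $n\ge 5$ is essential. I would therefore anchor the argument by verifying the boundary indices by hand for the smallest cases (say $n=5,6,7$), and then use the inductive bounds from the previous paragraph to handle larger $n$ uniformly, keeping track of the parity of $n$ since $D_{n,k}$ behaves very differently in the two cases.
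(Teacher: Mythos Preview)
Your overall architecture---reduce to a single scalar inequality via $M_k=\max\{|D_{n,k}|,\binom{n-1}{k}\}$, expand, bound the perturbation by controlling $|D_{n,k}|/\eul(n,k)$ inductively through Tanimoto's recurrences, and treat the boundary indices $i=1,n-2$ and small $n$ separately---matches the paper's proof (its Lemma~\ref{lemma-almost}, Lemma~\ref{lemma-bounds}, and the final argument) almost step for step.

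The genuine gap is in your second step. Applying Newton's inequalities to the real-rooted polynomial $A_n(t)$ gives only $\widetilde{e}_i^{\,2}\ge \widetilde{e}_{i+1}\widetilde{e}_{i-1}$, i.e.\ slack $\ge 0$, with no quantitative lower bound. That is not enough: after you bound the perturbation by something of order $\widetilde{e}_i^{\,2}/n$ (which is what your inductive control of $|D_{n,k}|$ will give, exactly as in the paper's Lemma~\ref{lemma-bounds}), you still need the slack itself to be at least of that order, and Newton applied to $A_n$ provides no such estimate. The paper supplies the missing ingredient by first proving a \emph{new} real-rootedness result (Theorem~\ref{thm: newrealroots}): the polynomial $\sum_{k}\eul(n,k)\binom{n-1}{k}^{-1}t^{k}$ is real-rooted. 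Newton's inequalities applied to \emph{this} polynomial yield the strengthened bound $\eul(n,i)^2\ge \varepsilon(i)^{2}\,\eul(n,i-1)\eul(n,i+1)$ with $\varepsilon(i)=\tfrac{(i+1)(n-i)}{i(n-i-1)}>1$, hence a slack of size at least $\tfrac{\varepsilon(i)-1}{\varepsilon(i)}\eul(n,i)^2=\tfrac{n}{(i+1)(n-i)}\eul(n,i)^2$, which is exactly the $\Theta(1/n)$ cushion needed to absorb the perturbation. Without this (or an equivalent quantitative strengthening of Eulerian ultra-log-concavity), your argument cannot close for the interior indices.
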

\begin{remark}
    The sequences $(P_{n,k})_{k=0}^{n-1}, (Q_{n,k})_{k=0}^{n-1}$ are not ultra-log-concave for $n<5$.
\end{remark}
This confirms {\cite[Conjecture 32]{dey-exc-logcon-altgrp}} and {\cite[Conjecture 33]{dey-exc-logcon-altgrp}} and also, improves it. It is proved in Section \ref{section: proof-main}.
We obtain the following corollary from this theorem.
\begin{corollary}\label{cor-ultra-alt-enum}
For a positive integer $n \ge 5$, the four sequences $$(B_{n,k})_{k=0}^{n-1}, (C_{n,k})_{k=0}^{n-1},(P_{n,k})_{k=0}^{n-1}, (Q_{n,k})_{k=0}^{n-1}$$ are all ultra-log-concave. 
\end{corollary}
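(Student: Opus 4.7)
The plan is to obtain this corollary as an immediate specialisation of Theorem \ref{thm: main}: the joint ultra-synchronisation inequality, which bounds a \emph{minimum} at index $i$ from below in terms of \emph{maxima} at indices $i \pm 1$, is formally stronger than the individual ultra-log-concave inequality for any one of the four participating sequences. So once Theorem \ref{thm: main} is in hand, the corollary should drop out in essentially one step.

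Concretely, I would fix $n \ge 5$ and let $(X_{n,k})$ denote any one of the four sequences $(B_{n,k}),(C_{n,k}),(P_{n,k}),(Q_{n,k})$. For each $1 \le i \le n-1$, since $X_{n,i}$ is a non-negative integer equal to one of the four values at index $i$, I have
\[
X_{n,i}^2 \;\ge\; \min(B_{n,i},C_{n,i},P_{n,i},Q_{n,i})^2,
\]
while at the neighbouring indices $X_{n,i\pm 1} \le \max(B_{n,i\pm 1},C_{n,i\pm 1},P_{n,i\pm 1},Q_{n,i\pm 1})$. Feeding these bounds into the ultra-synchronisation inequality (\ref{eqn:multiple-ultra-sync}) supplied by Theorem \ref{thm: main} immediately yields
\[
\frac{X_{n,i}^2}{\binom{n}{i}^2} \;\ge\; \frac{X_{n,i+1}}{\binom{n}{i+1}} \cdot \frac{X_{n,i-1}}{\binom{n}{i-1}},
\]
which is exactly ultra-log-concavity of $(X_{n,k})$ at the index $i$. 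Running this argument for each choice $X \in \{B,C,P,Q\}$ and each admissible $i$ establishes the corollary.

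There is no genuine technical obstacle: the derivation is a one-line monotonicity argument, and all of the real work is packed into Theorem \ref{thm: main}, which this corollary merely specialises. The only bookkeeping issue is that the entries must be non-negative for the min/max substitutions to preserve the inequalities after squaring and multiplication, and this is automatic since $B_{n,k},C_{n,k},P_{n,k},Q_{n,k}$ are counts of permutations.
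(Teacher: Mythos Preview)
Your argument is correct and is exactly what the paper intends: the corollary is stated without proof immediately after Theorem~\ref{thm: main}, since ultra-synchronisation of non-negative sequences trivially implies ultra-log-concavity of each member via the min/max substitution you give. The only slip is notational---the sequences here are indexed $0,\dots,n-1$, so by the paper's convention the binomials in the inequality should be $\binom{n-1}{i}$ rather than $\binom{n}{i}$, but this does not affect the logic.
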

This lends further support to a conjecture of Fulman, Kim, Lee and Petersen \cite{fulman2021joint}. 
\begin{conjecture}[Fulman, Kim, Lee, Petersen]
    The polynomials defined by $\sum_{k=0}^{n-1} B_{n,k}t^k$ and $\sum_{k=0}^{n-1} C_{n,k}t^k$ are real-rooted for $n\ge 2$. 
\end{conjecture}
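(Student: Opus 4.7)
The plan is to prove the conjecture by induction on $n$, strengthening the inductive hypothesis so that both $B_n(t):=\sum_k B_{n,k}t^k$ and $C_n(t):=\sum_k C_{n,k}t^k$ are real-rooted with all roots in $(-\infty,0]$ and that, moreover, the ordered pair $(B_n(t),C_n(t))$ is \emph{compatible} in the Wagner--Chudnovsky--Seymour sense, that is, every non-negative combination $\alpha B_n(t)+\beta C_n(t)$ is real-rooted (equivalently, the two polynomials admit a common interlacer). Small base cases are to be verified by direct computation.

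Combining Tanimoto's theorem with the classical Eulerian recurrence $\eul(n,k)=(n-k)\eul(n-1,k-1)+(k+1)\eul(n-1,k)$, which is satisfied by $\eul(n,k)=B_{n,k}+C_{n,k}$, one finds that for odd $n$ the numbers $B_{n,k}$ and $C_{n,k}$ each individually satisfy the Eulerian recurrence. Passing to polynomials this becomes
\[
B_n(t)=\bigl(1+(n-1)t\bigr)B_{n-1}(t)+t(1-t)B_{n-1}'(t),
\]
and the same identity for $C_n(t)$. The classical lemma used to prove real-rootedness of $A_n(t)$ says that the operator $T_n\colon f\mapsto(1+(n-1)t)f+t(1-t)f'$ preserves the class of polynomials with only real non-positive roots as well as the interlacing relation between two such polynomials. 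Hence the odd step of the induction is free, including the propagation of compatibility.

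For the even step, an analogous derivation gives
\[
2B_{n,k}=(n-k-1)B_{n-1,k-1}+(k+2)B_{n-1,k}+(n-k+1)C_{n-1,k-1}+kC_{n-1,k},
\]
and a symmetric identity for $2C_{n,k}$. Crucially, all four coefficients are non-negative, so after translation to polynomials both $2B_n(t)$ and $2C_n(t)$ are non-negative combinations (with polynomial-in-$t$ coefficients) of $B_{n-1}(t),\; tB_{n-1}'(t),\; C_{n-1}(t),\; tC_{n-1}'(t)$. By Rolle's theorem each of $B_{n-1}', C_{n-1}'$ has only real non-positive roots and interlaces its antiderivative, so under the compatibility hypothesis on $(B_{n-1},C_{n-1})$ one aims to show that the derivative-enriched quadruple $(B_{n-1},C_{n-1},B_{n-1}',C_{n-1}')$ admits a common interlacer, which by Hermite--Kakeya--Obreschkoff would force real-rootedness of $B_n$ and $C_n$.

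The main obstacle lies precisely in that last step: upgrading compatibility of a pair to compatibility of the derivative-enriched quadruple is subtle and in general false---one must exploit the specific non-negative coefficient pattern appearing in the even recurrence. A natural candidate for a common interlacer is $A_{n-1}(t)$ or a close relative (perhaps $A_{n-1}(t)/(1+t)$); showing that this polynomial common-interlaces $\{B_{n-1},C_{n-1},B_{n-1}',C_{n-1}'\}$ and that the common-interlacing property is inherited by the pair $(B_n,C_n)$ (so that the induction closes across \emph{both} parities) is the genuine technical heart of the problem. Supplementary input from the explicit Foata--Desarm\'enien factorisation $D_{2m+1}(t)=(1-t)^m A_m(t)$ and its even-index analogue, together with the ultra-log-concavity of $(B_{n,k})$ and $(C_{n,k})$ just established in Corollary~\ref{cor-ultra-alt-enum}, would likely be the key auxiliary constraints needed to pin down the required root geometry.
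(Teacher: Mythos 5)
This statement is not proved in the paper at all: it is stated as a conjecture of Fulman, Kim, Lee and Petersen, and the paper only offers indirect support for it by proving that the coefficient sequences are ultra-log-concave (Corollary \ref{cor-ultra-alt-enum}), which is a necessary consequence of real-rootedness via Newton's inequalities but nowhere near sufficient. So there is no proof in the paper to compare yours against, and as far as the paper records, the conjecture is open.

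Your proposal is likewise not a proof but a programme, and you are candid about where it breaks. The parts you do establish are correct: combining Tanimoto's recurrence for $D_{n,k}=B_{n,k}-C_{n,k}$ with the Eulerian recurrence for $B_{n,k}+C_{n,k}$ does give, for odd $n$, that $B_{n,k}$ and $C_{n,k}$ each satisfy the Eulerian recurrence individually (so the odd step follows from the classical argument for $A_n(t)$), and your even-step identity $2B_{n,k}=(n-k-1)B_{n-1,k-1}+(k+2)B_{n-1,k}+(n-k+1)C_{n-1,k-1}+kC_{n-1,k}$ with non-negative coefficients is also correct. The genuine gap is exactly the one you name: closing the even step. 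Two further cautions there. First, Hermite--Kakeya--Obreschkoff governs \emph{constant} non-negative combinations of a compatible family, whereas your even recurrence, once translated to polynomials, combines $B_{n-1},tB_{n-1}',C_{n-1},tC_{n-1}'$ with \emph{polynomial} coefficients (of the shape $1+ct$ and $t(1-t)$), so even a common interlacer for the quadruple would not immediately yield real-rootedness of $B_n$; you would need an operator-level statement (e.g.\ preservation of a stability or interlacing property by the specific linear map), not just compatibility of the inputs. Second, the claim that the odd-step operator preserves interlacing of a pair is plausible but unproven in your sketch, and the induction must also carry compatibility of $(B_n,C_n)$ back out of the even step, which is precisely the hard direction. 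As it stands, the proposal identifies the right structural ingredients but does not prove the conjecture.
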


Finally, we give some conjectures about the real rootedness of another class of polynomials.

\section{A real-rooted family of polynomials}

We begin by proving the real-rootedness of a family of polynomials.
\begin{theorem}\label{thm: newrealroots}
    The polynomials defined by 
    \begin{equation}
          P_n(t):=\displaystyle \sum_{k=0}^{n-1} \frac{\eul(n,k)}{\binom{n-1}{k}}t^k
    \end{equation}
    are real-rooted for all $n\in \mathbb N$.
\end{theorem}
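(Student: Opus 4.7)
The plan is to prove real-rootedness of $P_n(t)$ by induction on $n$, via a recurrence that relates $P_n$ to $P_{n-1}$ through a second-order differential operator acting diagonally on the monomial basis. Starting from the classical recurrence $\eul(n,k)=(k+1)\eul(n-1,k)+(n-k)\eul(n-1,k-1)$, dividing by $\binom{n-1}{k}$, and using $\binom{n-1}{k}=\tfrac{n-1}{n-1-k}\binom{n-2}{k}=\tfrac{n-1}{k}\binom{n-2}{k-1}$ to re-express the right-hand side via coefficients of $P_{n-1}$, I would sum over $k$ and factor out $(1+t)$ to obtain
\[
(n-1)\,P_n(t) \;=\; (1+t)\,L_n\bigl[P_{n-1}\bigr](t),
\]
where $L_n := -t^2\,\tfrac{d^2}{dt^2} + (n-3)\,t\,\tfrac{d}{dt} + (n-1)\,\id$. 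A direct check on monomials gives $L_n[t^k] = (k+1)(n-1-k)\,t^k$, so $L_n$ is diagonal on $\mathbb{R}[t]_{\le n-2}$ with strictly positive eigenvalues.

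Next I would factor $L_n = S_1 \circ S_2$, where $S_1[p] := (tp)'$ and $S_2[p] := (n-1)\,p - tp'$; these act on monomials as $S_1[t^k] = (k+1)t^k$ and $S_2[t^k] = (n-1-k)t^k$. Each of $S_1$ and $S_2$ preserves the class of real-rooted polynomials with positive coefficients. For $S_1$, this is Rolle's theorem: multiplying $p$ by $t$ preserves real-rootedness with nonpositive roots, and differentiating a polynomial with only real roots yields one with only real roots. For $S_2$, let $p^*(t) := t^{n-2} p(1/t)$ denote the reversal on $\mathbb{R}[t]_{\le n-2}$; it is an involution on polynomials with nonzero constant term, and it sends real roots to their reciprocals, hence preserves real-rootedness with negative roots. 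A direct comparison of monomial coefficients shows $S_2[p] = (S_1[p^*])^*$, so $S_2$ inherits the preservation property from $S_1$.

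The induction then closes. The base case $P_1(t)=1$ is trivial. Assuming $P_{n-1}$ is real-rooted, its positive coefficients force its roots to be strictly negative, so $S_2[P_{n-1}]$ (by the reversal argument) and hence $L_n[P_{n-1}] = S_1\bigl[S_2[P_{n-1}]\bigr]$ (by Rolle) are real-rooted; therefore so is $P_n(t) = (1+t)\,L_n[P_{n-1}](t)/(n-1)$.

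The main technical content is the derivation and diagonalization of $L_n$, which requires spotting the correct regrouping of the Eulerian recurrence after division by $\binom{n-1}{k}$. Once the identity $L_n[t^k] = (k+1)(n-1-k)t^k$ is in hand, the factorization into the elementary operators $S_1$ and $S_2$ and the preservation of real-rootedness are routine consequences of Rolle's theorem and reciprocation of roots.
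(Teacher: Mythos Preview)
Your proposal is correct and follows essentially the same approach as the paper: the operator $L_n$ is the paper's $T_n$ (up to the $(1+t)/(n-1)$ factor), your factorization $L_n=S_1\circ S_2$ is exactly the paper's rewriting $P_n=\tfrac{1+t}{n-1}\,\tfrac{d}{dt}\bigl(t\,((n-1)P_{n-1}-tP_{n-1}')\bigr)$, and your reversal argument for $S_2$ is precisely the content of the paper's lemma that $nf-xf'$ is real-rooted via the reciprocal polynomial. The only cosmetic differences are your explicit diagonalization $L_n[t^k]=(k+1)(n-1-k)t^k$ and your choice of base case $P_1=1$ rather than $P_3=(1+t)^2$.
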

We require a few small lemmas to prove Theorem \ref{thm: newrealroots}.
\begin{lemma}
    Define \begin{equation}
        T_n:=\frac{(1+t)}{n-1}\bigg((n-1)+(n-3)t\dfrac{d}{dt}-t^2\dfrac{d^2}{dt^2} \bigg)
        \label{operator}
    \end{equation}
then, we have $T_n(P_{n-1}(t))=P_n(t)$.
\end{lemma}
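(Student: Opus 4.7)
The plan is to verify the identity $T_n(P_{n-1}(t))=P_n(t)$ by comparing the coefficient of $t^k$ on each side, the key input being the classical Eulerian recurrence $\eul(n,k)=(k+1)\eul(n-1,k)+(n-k)\eul(n-1,k-1)$.

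First I would factor $T_n=\tfrac{1+t}{n-1}\mathcal L_n$, where $\mathcal L_n:=(n-1)+(n-3)t\tfrac{d}{dt}-t^2\tfrac{d^2}{dt^2}$, and observe that $\mathcal L_n$ acts diagonally on monomials: a direct evaluation yields $\mathcal L_n(t^k)=\bigl((n-1)+(n-3)k-k(k-1)\bigr)t^k$, and the quadratic $(n-1)+(n-3)k-k(k-1)=(k+1)(n-1-k)$ factors cleanly. So $\mathcal L_n$ multiplies the coefficient of $t^k$ in $P_{n-1}(t)=\sum_{k=0}^{n-2}\frac{\eul(n-1,k)}{\binom{n-2}{k}}t^k$ by $(k+1)(n-1-k)$.

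Next, using the elementary identity $\binom{n-2}{k}=\tfrac{n-1-k}{n-1}\binom{n-1}{k}$, the factor $(n-1-k)/\binom{n-2}{k}$ collapses to $(n-1)/\binom{n-1}{k}$. Dividing by $n-1$ gives
\[\tfrac{1}{n-1}\mathcal L_n\bigl(P_{n-1}(t)\bigr)=\sum_{k=0}^{n-2}(k+1)\frac{\eul(n-1,k)}{\binom{n-1}{k}}t^k.\]
Multiplying by $(1+t)$ and collecting, the coefficient of $t^k$ in $T_n(P_{n-1})$ becomes
\[(k+1)\frac{\eul(n-1,k)}{\binom{n-1}{k}}+k\frac{\eul(n-1,k-1)}{\binom{n-1}{k-1}}.\]
The identity $k\binom{n-1}{k}=(n-k)\binom{n-1}{k-1}$ rewrites the second summand as $(n-k)\eul(n-1,k-1)/\binom{n-1}{k}$, so the coefficient equals $\tfrac{(k+1)\eul(n-1,k)+(n-k)\eul(n-1,k-1)}{\binom{n-1}{k}}=\tfrac{\eul(n,k)}{\binom{n-1}{k}}$ by the Eulerian recurrence, which matches $P_n(t)$ exactly.

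The only subtlety is the endpoints $k=0$ and $k=n-1$, where one of the summands in $P_{n-1}$ is absent. At $k=n-1$ only the shifted term contributes, and it correctly produces $(n-1)\cdot\eul(n-1,n-2)/\binom{n-1}{n-2}=1=\eul(n,n-1)/\binom{n-1}{n-1}$; the case $k=0$ is immediate. Beyond this boundary bookkeeping, the lemma is a routine reindexing of the Eulerian recurrence through two binomial identities, so I do not anticipate any substantive obstacle; the main creative step is the factorisation $(n-1)+(n-3)k-k(k-1)=(k+1)(n-1-k)$, which is exactly what makes the denominators in $P_n$ and $P_{n-1}$ cooperate.
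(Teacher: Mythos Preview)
Your argument is correct and follows essentially the same route as the paper: both verify the identity coefficientwise via the Eulerian recurrence $\eul(n,k)=(k+1)\eul(n-1,k)+(n-k)\eul(n-1,k-1)$ together with elementary binomial identities. The paper phrases it as deriving a recurrence $S(n,k)=\tfrac{(n-k-1)(k+1)}{n-1}S(n-1,k)+\tfrac{k(n-k)}{n-1}S(n-1,k-1)$ for $S(n,k)=\eul(n,k)/\binom{n-1}{k}$ and then asserting that $T_n$ produces exactly these coefficients; you instead compute the action of $\mathcal L_n$ on monomials first and then invoke the recurrence, which amounts to the same computation in a different order and with the ``it can be checked'' step made explicit.
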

\begin{proof}
    The numbers $\eul(n,k)$ satisfy the recurrence \begin{equation}
        \eul(n,k)=(k+1)\eul(n-1,k)+(n-k)\eul(n-1,k-1).
        \label{eulerian}
    \end{equation} Write $S(n,k)=\frac{\eul(n,k)}{\binom{n-1}{k}}$ and substitute $\binom{n-1}{k}S(n,k)$ for $\eul(n,k)$ in Equation \ref{eulerian}. We get 
    \begin{equation}
        \binom{n-1}{k}S(n,k)=(k+1)\binom{n-2}{k}S(n-1,k)+(n-k)\binom{n-2}{k-1}S(n-1,k-1).
    \end{equation}
    We simplify this to get 
    \begin{equation}
        S(n,k)=\frac{(n-k-1)(k+1)}{n-1}S(n-1,k)+\frac{k(n-k)}{n-1}S(n-1,k-1)
    \end{equation}
    It can be checked that applying $T_n$ on $P_{n-1}(t)$ produces the RHS of the above equation as the coefficient of $t^k$. This finishes the proof.
\end{proof}
The fact that $T_n$ preserves real rootedness can be proved using the following easy lemma.
\begin{lemma}\label{lem: quickrealderivative}
    If $f\in \mathbb R[x]$ is a polynomial of degree $n$ with non-zero coefficients and real roots, then  $F(x):=nf(x)-xf'(x)$ is real-rooted.
\end{lemma}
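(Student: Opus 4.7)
My plan is to exploit the fact that the operator $f\mapsto nf(x)-xf'(x)$ is essentially the composition ``reverse the polynomial, differentiate, then reverse again.'' More precisely, if $f(x)=\sum_{k=0}^n a_k x^k$, a direct expansion gives $F(x)=\sum_{k=0}^{n-1}(n-k)a_k x^k$. Defining the reciprocal polynomial $g(x):=x^n f(1/x)=\sum_{k=0}^n a_{n-k}x^k$, I would compute $g'(x)=\sum_{k=0}^{n-1}(k+1)a_{n-1-k}x^k$ and check by re-indexing $j=n-1-k$ that
\[
x^{n-1}g'(1/x)=\sum_{k=0}^{n-1}(n-k)a_k x^k=F(x).
\]
So $F$ is exactly the reciprocal polynomial of $g'$.

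From here the argument unfolds in three short steps. First, since $f$ has all non-zero coefficients, in particular $a_0\neq 0$, so $f$ has no root at the origin; hence $g$ has degree $n$ and its roots are the reciprocals $1/r_i$ of the roots $r_i$ of $f$. Because all $r_i$ are real and non-zero, $g$ is real-rooted. Second, $g'$ is real-rooted by the standard consequence of Rolle's theorem that the derivative of a real-rooted polynomial is real-rooted. Third, I want to lift real-rootedness of $g'$ back to $F$ via the reciprocal relation $F(x)=x^{n-1}g'(1/x)$; this works provided $g'$ has no root at $0$, i.e.\ provided its constant term $a_{n-1}$ is non-zero. This is exactly what the hypothesis on non-zero coefficients of $f$ supplies. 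Under that condition, all $n-1$ roots of $g'$ are real and non-zero, their reciprocals give $n-1$ real roots of $F$, and $F$ itself has degree exactly $n-1$ (leading coefficient $a_{n-1}$), so it is fully real-rooted.

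I do not expect any real obstacle in this lemma; the only place to be careful is the bookkeeping at $0$ and $\infty$, which is handled precisely by the two extreme coefficients $a_0$ and $a_{n-1}$ being non-zero. The hypothesis ``non-zero coefficients'' is used in a mild but essential way: $a_0\neq 0$ makes $g$ a genuine degree-$n$ real-rooted polynomial, and $a_{n-1}\neq 0$ makes the reciprocation from $g'$ back to $F$ lose no roots. If instead one only assumed $f$ real-rooted, the conclusion would still hold by a direct logarithmic derivative argument, namely $F(x)/f(x)=\sum_{i=1}^{n}\frac{-r_i}{x-r_i}$ interlaces appropriately; but the reciprocal-polynomial route above is shorter and uses only Rolle's theorem, which fits the author's evident intent of calling this an ``easy'' lemma.
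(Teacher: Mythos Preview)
Your proof is correct and follows essentially the same approach as the paper: form the reciprocal polynomial $g(x)=x^n f(1/x)$, differentiate, and take the reciprocal again to recover $F$. Your write-up is in fact more careful than the paper's, since you make explicit where the hypotheses $a_0\neq 0$ and $a_{n-1}\neq 0$ are actually used.
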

\begin{proof}
     Consider $G(x)=x^{n}f(\frac{1}{x})$. The roots of this polynomials are just reciprocals of the roots of $f$. Therefore, it has real roots. Now, $G'(x)=nx^{n-1}f(\frac{1}{x})-x^{n-2}f'(\frac{1}{x})$ also has real roots, but is of degree $n-1$. The reciprocal polynomial of $G'$ is $nf(x)-xf'(x)$ which also has real roots and finishes the proof.
\end{proof}
We can prove Theorem \ref{thm: newrealroots}.
\begin{proof}[Proof of Theorem \ref{thm: newrealroots}]
    We have $$P_n(t)=T_n(P_{n-1})=\frac{(1+t)}{n-1}\bigg((n-1)+(n-3)t\dfrac{d}{dt}-t^2\dfrac{d^2}{dt^2} \bigg)P_{n-1}$$
    This can be rewritten as
    \begin{eqnarray*}
        P_n(t)&=&\frac{(1+t)}{n-1}\bigg(\dfrac{d}{dt}(n+1)tP_{n-1}(t)-\dfrac{d^2}{dt^2}t^2P_{n-1}(t) \bigg)\\
        &=& \frac{(1+t)}{n-1}\dfrac{d}{dt}\bigg(t\big((n-1)P_{n-1}(t)-tP'_{n-1}(t)\big) \bigg)
    \end{eqnarray*}
    We can verify that $P_3(t)=(t+1)^2$ has real roots. Suppose $P_{n-1}(t)$ is real rooted, then so is $(n-1)P_{n-1}(t)-tP'_{n-1}(t)$ by Lemma \ref{lem: quickrealderivative} and therefore, $P_n(t)$.
\end{proof}
\section{Proof of Theorem \ref{thm: main}}\label{section: proof-main}
Before we go into the proof of the main theorem, we need this theorem commonly attributed to Newton (see, Stanley \cite{stanley-survey-logcon}).
\begin{theorem}\label{thm: newton-real-ulc}
    Let $$P(x)=\displaystyle \sum_{k=0}^n \binom{n}{k}a_kx^k$$ be a (real) polynomial with only real roots. Then, the sequence $(a_k)_{k=0}^n$ is log-concave.
\end{theorem}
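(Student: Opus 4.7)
The plan is to invoke Rolle's theorem to reduce Newton's inequality, at each index, to the discriminant condition for a suitable quadratic. First I would observe that if $P(x) = \sum_{k=0}^n \binom{n}{k} a_k x^k$ has only real roots, then so does $P'(x)$, and a direct computation using $k \binom{n}{k} = n \binom{n-1}{k-1}$ gives $\tfrac{1}{n} P'(x) = \sum_{k=0}^{n-1} \binom{n-1}{k} a_{k+1} x^k$. In other words, differentiation shifts the binomially normalised coefficients by one index while preserving real-rootedness. Iterating $j$ times, the scaled derivative $\tfrac{(n-j)!}{n!} P^{(j)}(x) = \sum_{i=0}^{n-j} \binom{n-j}{i} a_{i+j} x^i$ is again of the same normalised shape and still has only real roots.

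Next I would use reciprocation: for any polynomial $Q$ of degree $m$ with only real nonzero roots, $x^m Q(1/x)$ also has only real roots, and applied to the polynomial above it replaces the normalised coefficients $a_{i+j}$ by $a_{n-i}$ at binomial weight $\binom{n-j}{i}$. For any fixed $0 \le j \le n-2$ I would then compose three operations on $P$: differentiate $j$ times, reciprocate, and differentiate a further $n-j-2$ times. Tracking the binomial factors at each step, the end product is a positive multiple of the quadratic $a_j x^2 + 2 a_{j+1} x + a_{j+2}$, whose real-rootedness is equivalent to $(2a_{j+1})^2 \ge 4 a_j a_{j+2}$, i.e.\ to the log-concavity inequality $a_{j+1}^2 \ge a_j a_{j+2}$ at index $j+1$. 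Ranging $j$ over $0, 1, \dots, n-2$ yields the full log-concavity of $(a_k)_{k=0}^n$.

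The main technical hurdle is the bookkeeping of the binomial factors across the interleaved differentiations and reciprocations; once the two operators are set up correctly on the normalised form, each step is routine. A secondary issue is degenerate cases where an intermediate polynomial loses degree because a leading coefficient vanishes (so reciprocation is not clean). I would handle this by the standard continuity argument: perturb the roots of $P$ slightly so that all roots are distinct and nonzero, run the argument for the perturbed polynomial, and pass to the limit, which preserves the (non-strict) inequality $a_{j+1}^2 \ge a_j a_{j+2}$ for the original sequence.
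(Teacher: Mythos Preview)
Your argument is the classical proof of Newton's inequalities and is correct: differentiation preserves real-rootedness and shifts the normalised coefficient sequence, reciprocation reverses it, and the composite reduces each index to the discriminant of a quadratic. The perturbation/limit step to handle degenerate intermediate degrees is standard and suffices for the non-strict inequality.

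There is nothing to compare against in the paper itself: the paper does not give a proof of this theorem but simply cites it as a known result attributed to Newton (referring the reader to Stanley's survey). So your write-up supplies what the paper omits. One small refinement you might make in the write-up: the degenerate case where the resulting quadratic has leading coefficient $a_j = 0$ does not actually need the perturbation argument, since then $a_{j+1}^2 \ge 0 = a_j a_{j+2}$ trivially; the perturbation is only needed to ensure reciprocation is clean (nonzero constant term/leading term) at the intermediate stages.
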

Define $\varepsilon(i)=\big(\frac{i+1}{i}\big)\big(\frac{n-i}{n-i-1}\big)$.
As a corollary to Theorems \ref{thm: newrealroots} and \ref{thm: newton-real-ulc}, we obtain
\begin{corollary}\label{eqn: required-corollary}
    The Eulerian numbers satisfy the following inequality.
    $$\eul(n,i)^2\ge \varepsilon(i)^2 \eul(n,i-1)\eul(n,i+1).$$
\end{corollary}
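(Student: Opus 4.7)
The plan is a direct two-step consequence of Theorems \ref{thm: newrealroots} and \ref{thm: newton-real-ulc}. First, I would rewrite the degree $n-1$ polynomial $P_n(t)$ in the Newton-ready form
\[
P_n(t) \;=\; \sum_{k=0}^{n-1} \binom{n-1}{k}\, a_k\, t^k, \qquad a_k \;:=\; \frac{\eul(n,k)}{\binom{n-1}{k}^2},
\]
which is obtained by writing the given denominator $\binom{n-1}{k}$ as $\binom{n-1}{k}\cdot 1$ and absorbing one factor into the Newton normalization. Since Theorem \ref{thm: newrealroots} asserts that $P_n$ is real-rooted, Theorem \ref{thm: newton-real-ulc} (applied with $n-1$ in place of $n$) gives $a_i^2 \geq a_{i-1}\,a_{i+1}$ for each $1 \le i \le n-2$. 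Clearing denominators, this reads
\[
\eul(n,i)^2 \;\geq\; \frac{\binom{n-1}{i}^4}{\binom{n-1}{i-1}^2\binom{n-1}{i+1}^2}\; \eul(n,i-1)\,\eul(n,i+1).
\]

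Second, I would verify the combinatorial identity
\[
\frac{\binom{n-1}{i}^2}{\binom{n-1}{i-1}\binom{n-1}{i+1}} \;=\; \frac{n-i}{i}\cdot\frac{i+1}{n-i-1} \;=\; \varepsilon(i),
\]
which follows at once from the two elementary ratios $\binom{n-1}{i}/\binom{n-1}{i-1}=(n-i)/i$ and $\binom{n-1}{i}/\binom{n-1}{i+1}=(i+1)/(n-i-1)$. Squaring this identity and substituting into the previous display produces the claimed inequality $\eul(n,i)^2 \geq \varepsilon(i)^2\, \eul(n,i-1)\,\eul(n,i+1)$ for $1 \le i \le n-2$; the boundary cases $i=0$ and $i=n-1$ are vacuous since $\eul(n,-1)=\eul(n,n)=0$ and $\varepsilon$ has a pole there.

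There is no real obstacle here, as the argument is essentially bookkeeping. The only genuine choice is deciding how to split the factor $\binom{n-1}{k}$ in the denominator of $P_n$ between the Newton weight and the coefficient $a_k$; splitting it as $\binom{n-1}{k}\cdot 1$ (rather than, say, $\sqrt{\binom{n-1}{k}}\cdot\sqrt{\binom{n-1}{k}}$, which would not produce integer binomial weights) is what yields the precise normalising factor $\varepsilon(i)^2$ that reappears in Section \ref{section: proof-main} as the driver of ultra-synchronicity.
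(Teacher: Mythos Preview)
Your proposal is correct and is exactly the argument the paper intends: the corollary is stated in the paper without proof, merely as a consequence of Theorems~\ref{thm: newrealroots} and~\ref{thm: newton-real-ulc}, and your two steps (apply Newton's inequality to the real-rooted degree-$(n-1)$ polynomial $P_n$, then identify the resulting binomial ratio with $\varepsilon(i)$) are precisely how one unpacks that sentence.
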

From this corollary, we obtain the bounds
\begin{corollary} For positive integer $n$ and $1\le i\le n-1$, we have
    \begin{equation}\label{eqn: strong-logcon-eulerian}
    \eul(n,i)^2-\varepsilon(i)\eul(n,i-1)\eul(n,i+1)\ge  \bigg(\frac{\varepsilon(i)-1}{\varepsilon(i)}\bigg)\eul(n,i)^2
\end{equation}
\end{corollary}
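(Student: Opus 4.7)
\medskip

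The plan is essentially an algebraic manipulation starting from the previous corollary. From Corollary \ref{eqn: required-corollary} we have
\[
\eul(n,i-1)\eul(n,i+1) \le \frac{\eul(n,i)^2}{\varepsilon(i)^2}.
\]
Since $\varepsilon(i)=\frac{(i+1)(n-i)}{i(n-i-1)}>0$ for $1\le i\le n-1$, multiplying by the positive quantity $\varepsilon(i)$ and flipping the inequality appropriately gives
\[
\varepsilon(i)\,\eul(n,i-1)\eul(n,i+1)\le \frac{\eul(n,i)^2}{\varepsilon(i)}.
\]

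Next I would subtract this from $\eul(n,i)^2$ to produce the left-hand side of \eqref{eqn: strong-logcon-eulerian}:
\[
\eul(n,i)^2-\varepsilon(i)\,\eul(n,i-1)\eul(n,i+1) \ge \eul(n,i)^2 - \frac{\eul(n,i)^2}{\varepsilon(i)} = \eul(n,i)^2 \cdot \frac{\varepsilon(i)-1}{\varepsilon(i)},
\]
which is precisely the inequality to be shown. A brief sanity check that the factor $\frac{\varepsilon(i)-1}{\varepsilon(i)}$ is nonnegative: a one-line calculation shows $(i+1)(n-i)-i(n-i-1)=n>0$, so $\varepsilon(i)>1$ and the bound is nontrivial.

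There is no real obstacle here — once the previous corollary is in hand, the statement reduces to a single substitution and a rearrangement. The only minor point to make explicit in the writeup is that $\varepsilon(i)>0$, so the direction of the inequality is preserved when we multiply through by it.
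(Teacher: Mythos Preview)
Your proposal is correct and matches the paper's intended argument: the paper states this corollary without proof, as an immediate algebraic consequence of Corollary~\ref{eqn: required-corollary}, and your substitution-and-rearrangement is exactly that deduction. One very minor quibble: at $i=n-1$ the denominator $n-i-1$ vanishes, so $\varepsilon(i)$ is not literally positive there (nor even defined); this is harmless since $\eul(n,n)=0$, but strictly speaking the range should be $1\le i\le n-2$ in your positivity remark.
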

Define 
\[ 
d^{(1)}(n,k)=|B_{n,k}-C_{n,k}|,\hspace{0.4cm} d^{(2)}(n,k)=|P_{n,k}-Q_{n,k}|=\binom{n-1}{k}.
\]

\begin{lemma}\label{lemma-almost}
    The four sequences $$(B_{n,k})_{k=0}^{n-1}, (C_{n,k})_{k=0}^{n-1},(P_{n,k})_{k=0}^{n-1}, (Q_{n,k})_{k=0}^{n-1}$$ are ultra-synchronised at the index $i$, where $1\le i\le n-1$, if the following inequality holds for all $j_1,j_2,j_3\in\{1,2\}$.
    \begin{equation}\label{eqn: to show}
   \bigg(\frac{\varepsilon(i)-1}{\varepsilon(i)}\bigg) \ge 3\varepsilon(i)\frac{d^{(j_1)}(n,i)}{\eul(n,i)}+\varepsilon(i)\frac{d^{(j_2)}(n,i+1)}{\eul(n,i+1)}+2\varepsilon(i)\frac{d^{(j_3)}(n,i-1)}{\eul(n,i-1)}
\end{equation}
\end{lemma}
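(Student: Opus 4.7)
The plan is to rewrite the ultra-synchronisation condition \eqref{eqn:multiple-ultra-sync} for the four sequences at a fixed index $i$ as an algebraic inequality purely in the Eulerian numbers $\eul(n,k)$ and the ``defects'' $d^{(1)}(n,k), d^{(2)}(n,k)$, and then reduce it to \eqref{eqn: to show} by invoking the strong log-concavity bound \eqref{eqn: strong-logcon-eulerian}. Since $B_{n,k} + C_{n,k} = \eul(n,k)$ and $|B_{n,k}-C_{n,k}| = d^{(1)}(n,k)$, one has $\min(B_{n,k}, C_{n,k}) = \frac{\eul(n,k) - d^{(1)}(n,k)}{2}$ and $\max(B_{n,k}, C_{n,k}) = \frac{\eul(n,k) + d^{(1)}(n,k)}{2}$, with analogous identities for $P_{n,k}, Q_{n,k}$ using $d^{(2)}(n,k)$. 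Taking min or max over both pairs shows that the minimum of the four sequences at $i$ equals $\frac{1}{2}(\eul(n,i) - d^{(j_1)}(n,i))$ for some $j_1 \in \{1,2\}$, and similarly for the maxima at $i \pm 1$. Hence it suffices to verify \eqref{eqn:multiple-ultra-sync} for every $(j_1, j_2, j_3) \in \{1,2\}^3$. After clearing denominators, the inequality to prove becomes
\[
(\eul(n,i) - d^{(j_1)}(n,i))^2 \binom{n}{i-1}\binom{n}{i+1} \ge (\eul(n,i-1) + d^{(j_3)}(n,i-1))(\eul(n,i+1) + d^{(j_2)}(n,i+1)) \binom{n}{i}^2.
\]

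A direct binomial calculation yields $\binom{n}{i}^2/(\binom{n}{i-1}\binom{n}{i+1}) = \frac{(i+1)(n-i+1)}{i(n-i)} < \varepsilon(i)$, so we may replace the binomial ratio on the right by $\varepsilon(i)$, merely weakening the inequality. Dropping the positive $d^{(j_1)}(n,i)^2$ on the left and regrouping the leading $\eul(n,i)^2 - \varepsilon(i)\eul(n,i-1)\eul(n,i+1)$ via \eqref{eqn: strong-logcon-eulerian}, the problem reduces to showing
\[
\frac{\varepsilon(i)-1}{\varepsilon(i)} \eul(n,i)^2 \ge 2\eul(n,i)d^{(j_1)}(n,i) + \varepsilon(i)\eul(n,i-1)d^{(j_2)}(n,i+1) + \varepsilon(i)\eul(n,i+1)d^{(j_3)}(n,i-1) + \varepsilon(i) d^{(j_2)}(n,i+1) d^{(j_3)}(n,i-1).
\]

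The last step is to divide by $\eul(n,i)^2$ and bound each of the three product cross terms using Corollary \ref{eqn: required-corollary} in the form $\eul(n,i-1)\eul(n,i+1)/\eul(n,i)^2 \le 1/\varepsilon(i)^2$, together with the elementary facts $d^{(j)}(n,k) \le \eul(n,k)$ and $\varepsilon(i) \ge 1$ (which implies $1/\varepsilon(i) \le \varepsilon(i)$). After inserting a factor $\eul(n,i{\pm}1)/\eul(n,i{\pm}1)$, the first two cross terms are at most $\varepsilon(i)\frac{d^{(j_2)}(n,i+1)}{\eul(n,i+1)}$ and $\varepsilon(i)\frac{d^{(j_3)}(n,i-1)}{\eul(n,i-1)}$ respectively; the doubly-defect term rewrites as $\varepsilon(i)\frac{\eul(n,i-1)\eul(n,i+1)}{\eul(n,i)^2}\cdot\frac{d^{(j_2)}(n,i+1)}{\eul(n,i+1)}\cdot\frac{d^{(j_3)}(n,i-1)}{\eul(n,i-1)}$, which we absorb into a second copy of $\varepsilon(i)\frac{d^{(j_3)}(n,i-1)}{\eul(n,i-1)}$ by bounding $d^{(j_2)}(n,i+1)/\eul(n,i+1) \le 1$. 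Finally $\frac{2d^{(j_1)}(n,i)}{\eul(n,i)} \le 3\varepsilon(i)\frac{d^{(j_1)}(n,i)}{\eul(n,i)}$ since $\varepsilon(i)\ge 1$, and collecting gives exactly \eqref{eqn: to show}. The only real subtlety, and really the only choice point in the argument, is in the doubly-defect term: the asymmetric coefficients $(3\varepsilon, \varepsilon, 2\varepsilon)$ appearing in \eqref{eqn: to show} are calibrated precisely so that absorbing this cross term into the $d^{(j_3)}(n,i-1)/\eul(n,i-1)$ summand (rather than the $d^{(j_2)}(n,i+1)/\eul(n,i+1)$ one) closes the argument.
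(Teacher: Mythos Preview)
Your argument is correct and follows essentially the same route as the paper's proof: both reduce the ultra-synchronisation condition to the factored inequality $(\eul(n,i)-d^{(j_1)}(n,i))^2 \ge \varepsilon(i)(\eul(n,i+1)+d^{(j_2)}(n,i+1))(\eul(n,i-1)+d^{(j_3)}(n,i-1))$ and then connect this to \eqref{eqn: to show} using the strong log-concavity bound \eqref{eqn: strong-logcon-eulerian} together with the trivial estimates $d^{(j)}(n,k)\le \eul(n,k)$; you simply run the chain of inequalities in the reverse direction from the paper.

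One small slip to clean up: the sequences in question are indexed from $0$ to $n-1$, so in the definition of ultra-synchronisation the binomial coefficients should be $\binom{n-1}{i}$, not $\binom{n}{i}$. With the correct binomials one has $\binom{n-1}{i}^2/\bigl(\binom{n-1}{i-1}\binom{n-1}{i+1}\bigr)=\varepsilon(i)$ \emph{exactly}, so your ``replace the ratio by $\varepsilon(i)$'' step is unnecessary (and the phrase ``merely weakening'' is then moot). This does not affect the validity of the argument, since you end up proving precisely the $\varepsilon(i)$-inequality that is required.
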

\begin{proof}
    By the log-concavity of $\eul(n,i)$, we have
\begin{eqnarray}\label{Step1}
    \hspace{-0.7cm}&&3\varepsilon(i)\frac{d^{(j_1)}(n,i)}{\eul(n,i)}+\varepsilon(i)\frac{d^{(j_2)}(n,i+1)}{\eul(n,i+1)}+2\varepsilon(i)\frac{d^{(j_3)}(n,i-1)}{\eul(n,i-1)}\nonumber
    \\\ge\hspace{-0.7cm}  &&\varepsilon(i)\bigg( 3\frac{\eul(n,i)d^{(j_1)}(n,i)}{\eul(n,i)^2}+\frac{\eul(n,i-1)d^{(j_2)}(n,i+1)}{\eul(n,i)^2}+2\frac{\eul(n,i+1)d^{(j_3)}(n,i-1)}{\eul(n,i)^2}\bigg)
\end{eqnarray}
If inequality \eqref{eqn: to show} holds, then the following inequality follows from \eqref{Step1}.
\begin{eqnarray}
   \bigg(\frac{\varepsilon(i)-1}{\varepsilon(i)}\bigg)\eul(n,i)^2 &\ge& \varepsilon(i) \bigg(3\eul(n,i)d^{(j_1)}(n,i)+\eul(n,i-1)d^{(j_2)}(n,i+1)\nonumber\\&&+\ 2\eul(n,i+1)d^{(j_3)}(n,i-1)\bigg)\label{Step2}
\end{eqnarray}
We use the weak bounds $d^{(j_1)}(n,i)\le \eul(n,i)$ and obtain $d^{(j_1)}(n,i)^2\le \eul(n,i)d^{(j_1)}(n,i)$ and $d^{(j_2)}(n,i+1)d^{(j_3)}(n,i-1) \le \eul(n,i+1)d^{(j_3)}(n,i-1)$.
\begin{eqnarray}
  && \hspace{-0.70cm}\varepsilon(i) \bigg(3\eul(n,i)d^{(j_1)}(n,i)+\eul(n,i-1)d^{(j_2)}(n,i+1)+\ 2\eul(n,i+1)d^{(j_3)}(n,i-1)\bigg)\nonumber\\
  &\ge& \varepsilon(i)\eul(n,i+1)d^{(j_3)}(n,i-1)+\varepsilon(i)\eul(n,i-1)d^{(j_2)}(n,i+1)+2\eul(n,i)d^{(j_1)}(n,i)\nonumber\\&&+\ \varepsilon(i)d^{(j_2)}(n,i+1)d^{(j_3)}(n,i-1)-d^{(j_1)}(n,i)^2\label{Step3}
\end{eqnarray}
Combining inequalities \eqref{Step2}, \eqref{Step3} and rearranging terms gives
\begin{eqnarray}
    &&\bigg(\frac{\varepsilon(i)-1}{\varepsilon(i)}\bigg)\eul(n,i)^2+d^{(j_1)}(n,i)^2\nonumber\\ &\ge& \varepsilon(i)\eul(n,i+1)d^{(j_3)}(n,i-1)+\varepsilon(i)\eul(n,i-1)d^{(j_2)}
    (n,i+1)\nonumber\\&&+2\eul(n,i)d^{(j_1)}(n,i)+\varepsilon(i)d^{(j_2)}(n,i+1)d^{(j_3)}(n,i-1)\label{Step4}
\end{eqnarray}
Using Corollary \ref{eqn: required-corollary}, we can write
\begin{equation}
    \eul(n,i)^2-\varepsilon(i)\eul(n,i-1)\eul(n,i+1)+d^{(j_1)}(n,i)^2 \ge \bigg(\frac{\varepsilon(i)-1}{\varepsilon(i)}\bigg)\eul(n,i)^2+d^{(j_1)}(n,i)^2 \label{Step-Cor}
\end{equation}
Combining inequalities \eqref{Step4} and \eqref{Step-Cor} and rearranging gives
\begin{eqnarray}\label{StepFinal}
   && (\eul(n,i)-d^{(j_1)}(n,i))^2\nonumber\\&\ge& \varepsilon(i)(\eul(n,i+1)+\!d^{(j_2)}(n,i+1))(\eul(n,i-1)+\!d^{(j_3)}(n,i-1))
\end{eqnarray}
Since this is true for all choices of $j_1,j_2,j_3$, we have proved our result.
\end{proof}

\begin{remark}
    \[
    \frac{d^{(i)}(n,0)}{\eul(n,0)}=\frac{d^{(i)}(n,n-1)}{\eul(n,n-1)}=1.
    \]
\end{remark}
\begin{lemma}\label{lemma-bounds} When $n\ge 19$, $1\le k \le n-2$ and $i=1,2$,
    \[\frac{d^{(i)}(n,k)}{\eul(n,k)}\le \frac{1}{18n}.
    \]
\end{lemma}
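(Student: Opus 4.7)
The proof splits into the two cases $i = 2$ (binomial-coefficient side) and $i = 1$ (signed Eulerian side), each handled by a different tool.

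For $i = 2$, I would invoke Theorem \ref{thm: newrealroots} and Theorem \ref{thm: newton-real-ulc} in tandem: the real-rootedness of $P_n(t)$ implies via Newton's theorem that the sequence $c_k := \eul(n,k)/\binom{n-1}{k}$ is log-concave in $k$. The palindromic symmetries $\eul(n,k) = \eul(n, n-1-k)$ and $\binom{n-1}{k} = \binom{n-1}{n-1-k}$ force $c_k = c_{n-1-k}$, while $c_0 = c_{n-1} = 1$. A symmetric log-concave sequence is unimodal with mode at the centre, so its minimum over $\{1, \ldots, n-2\}$ is attained at the symmetric endpoints $c_1 = c_{n-2} = (2^n - n - 1)/(n-1)$. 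Thus $\binom{n-1}{k}/\eul(n,k) \le (n-1)/(2^n - n - 1)$, and the target bound $\le 1/(18n)$ reduces to the elementary exponential estimate $2^n \ge 18n(n-1) + n + 1$, which holds for all $n \ge 11$.

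For $i = 1$, set $R(n,k) := |D_{n,k}|/\eul(n,k)$. The plan is to prove by induction on $n$ the uniform bound $R(n, k) \le 2^{-\lfloor n/2 \rfloor}$ for all $1 \le k \le n-2$; this is sufficient because $2^{\lfloor n/2 \rfloor} \ge 18n$ already for $n \ge 19$ (indeed $2^9 = 512 \ge 342$). Tanimoto's recurrences drive the inductive step. When $n$ is odd, the $D$-recurrence has the same coefficients $(k+1), (n-k)$ as the Eulerian recurrence, so a weighted-average argument gives $R(n, k) \le \max(R(n-1, k), R(n-1, k-1))$. When $n$ is even, the $D$-recurrence has unit coefficients while $\eul(n,k) \ge 2(\eul(n-1,k) + \eul(n-1,k-1))$ whenever $1 \le k \le n-2$ (since $k+1, n-k \ge 2$ in that range), yielding the halving $R(n, k) \le \tfrac{1}{2}\max(R(n-1, k), R(n-1, k-1))$. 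Combining the two recurrences advances the exponent $\lfloor n/2 \rfloor$ by exactly the right amount, provided the maxima stay in the interior of the previous layer.

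The main obstacle is the boundary index $k = 1$ (equivalently $k = n-2$ by the reflection symmetry $|D_{n,k}| = |D_{n, n-1-k}|$), where the halving recurrence involves the trivial value $R(n-1, 0) = 1$ and so gives nothing. I would handle this by specialising Tanimoto's recurrences at $k = 1$: they collapse to $D_{n, 1} = 2D_{n-1, 1} + (n-1)$ for odd $n$ and $D_{n, 1} = D_{n-1, 1} - 1$ for even $n$, which iterate to the closed form $|D_{n, 1}| = 2^{\lceil n/2 \rceil} - n - 1$ (valid for $n \ge 5$). Combined with $\eul(n, 1) = 2^n - n - 1$, a direct algebraic check separately for each parity of $n$ shows $R(n, 1) \le 2^{-\lfloor n/2 \rfloor}$, so the boundary step of the induction closes. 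Combining the interior halving/monotonicity with the boundary closed form completes the $i = 1$ case.
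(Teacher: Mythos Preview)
Your argument is correct and, in both halves, differs from the paper's.

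For $i=2$ the paper never invokes Theorem~\ref{thm: newrealroots}; it runs the same Pascal-recurrence induction as for $i=1$, proving the slightly stronger $\binom{n}{k}/\eul(n,k)\le 1/(18n)$ with a computer-verified base case at $n=15$. Your route through log-concavity and palindromic symmetry of $c_k=\eul(n,k)/\binom{n-1}{k}$ is cleaner: it needs no machine check, kicks in already at $n\ge 11$, and gives the real-rootedness theorem a second job beyond feeding Corollary~\ref{eqn: required-corollary}. (Minor point: the paper's Theorem~\ref{thm: newton-real-ulc} literally yields log-concavity of $c_k/\binom{n-1}{k}$; log-concavity of $c_k$ itself then follows either from the product-of-log-concave-sequences fact or directly from the standard ``real-rooted $\Rightarrow$ log-concave coefficients'' implication.)

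For $i=1$ both proofs induct via Tanimoto's recurrences, but with different targets. The paper aims straight at $1/(18n)$, anchored by a computer check at $n=19$; you aim at the exponentially sharper $2^{-\lfloor n/2\rfloor}$ and then observe $2^{\lfloor n/2\rfloor}\ge 18n$ for $n\ge 19$. Your explicit boundary treatment via the closed form $|D_{n,1}|=2^{\lceil n/2\rceil}-n-1$ is in fact more careful than the paper's: the paper's inductive step silently appeals to the hypothesis at the indices $k-1=0$ and $k=n-2$, where $d^{(1)}/\eul=1$ and the bound fails, so as written its induction only covers $2\le k\le n-3$ (which suffices for the downstream application but not for the lemma as stated). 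The one thing you should add is a base case for your $i=1$ induction; any small $n_0\ge 4$ works and is a one-line check.
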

\begin{proof}First, we prove for $i=1$ by induction. The base case at $n=19$ is verified by computer. We assume the hypothesis for $n=n'$ and prove it for $n=n'+1$.
    If $n=n'+1$ is even, then we know that
    \begin{eqnarray*}
       d^{(1)}(n,k)=|D_{n,k}|&=&|D_{n-1,k-1}-D_{n-1,k}|\\ 
        &\le& d^{(1)}(n-1,k-1)+d^{(1)}(n-1,k) \\
    &\le& \frac{\eul(n-1,k-1)}{18(n-1)}+\frac{\eul(n-1,k)}{18(n-1)}\\
    &=& \frac{(n-k)\eul(n-1,k-1)}{18(n-1)(n-k)}+\frac{(k+1)\eul(n-1,k)}{18(n-1)(k+1)}\\
    &\le& \frac{\eul(n,k)}{18(n-1)\min(n-k,k+1)}\\
    &\le& \frac{\eul(n,k)}{18n} \hspace{2cm} (\mbox{as }\min(n-k,k+1)\ge 2)
    \end{eqnarray*}
 Now, if $n=n'+1$ is odd, then we have
 \begin{eqnarray*}
     d^{(1)}(n,k)&=&|(n-k)D_{n-1,k-1}+(k+1)D_{n-1,k}|\\
     &=&|(n-k)(D_{n-2,k-2}-D_{n-2,k-1})+(k+1)(D_{n-2,k-1}-D_{n-2,k})|\\
     &=&|(n-k)D_{n-2,k-2}+kD_{n-2,k-1}-(n-k-1)D_{n-2,k-1}-(k+1)D_{n-2,k}|\\
     &\le &|(n-k)D_{n-2,k-2}|+|kD_{n-2,k-1}|+|(n-k-1)D_{n-2,k-1}|+|(k+1)D_{n-2,k}|\\
     &\le & \frac{(n-k)\eul(n-2,k-2)}{18(n-2)}+\frac{k\eul(n-2,k-1)}{18(n-2)}\\
     &&+\frac{(n-k-1)\eul(n-2,k-1)}{18(n-2)}+\frac{(k+1)\eul(n-2,k)}{18(n-2)}\\
     &=&\frac{\eul(n-1,k-1)}{18(n-2)}+\frac{\eul(n-1,k)}{18(n-2)}\\
     &=&\frac{(n-k)\eul(n-1,k-1)}{18(n-2)(n-k)}+\frac{(k+1)\eul(n-1,k)}{18(n-2)(k+1)}\\
     &\le& \frac{\eul(n,k)}{18(n-2)\min(n-k,k+1)}\\
    &\le& \frac{\eul(n,k)}{18n} \hspace{2cm} (\mbox{as }\min(n-k,k+1)\ge 2)
 \end{eqnarray*}
 We prove the stronger claim that 
 \begin{equation}\label{bound-binom-typeb}
     \frac{\binom{n}{k}}{\eul(n,k)}\le \frac{1}{18n}.
 \end{equation}
We prove once again by induction. The base case at $n=15$ is verified by computer.
\begin{eqnarray*}
       \binom{n}{k}&=&\binom{n-1}{k-1}+\binom{n-1}{k}\\ 
    &\le& \frac{\eul(n-1,k-1)}{18(n-1)}+\frac{\eul(n-1,k)}{18(n-1)}\\
    &=& \frac{(n-k)\eul(n-1,k-1)}{18(n-1)(n-k)}+\frac{(k+1)\eul(n-1,k)}{18(n-1)(k+1)}\\
    &\le& \frac{\eul(n,k)}{18(n-1)\min(n-k,k+1)}\\
    &\le& \frac{\eul(n,k)}{18n} \hspace{2cm} (\mbox{as }\min(n-k,k+1)\ge 2)
    \end{eqnarray*}
 \end{proof}
 \begin{lemma}
     For positive integer $n$,
     $$d^{(1)}(2n+1,1)=\eul(n+1,1)-n,$$
 $$d^{(1)}(2n,1)=\eul(n,1)-n.$$
 \end{lemma}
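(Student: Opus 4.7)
The plan is to track the signed quantities $D_{N,1} = B_{N,1}-C_{N,1}$ directly using Tanimoto's recurrence (the first displayed theorem), and only pass to absolute values at the very end. The key initial observation is that $\eul(N,0)=1$ with the identity being even, so $D_{N,0}=1$ for every $N\ge 1$. This gives the specialisations of Tanimoto at $k=1$:
\begin{align*}
D_{2n+1,1} &= 2n\cdot D_{2n,0} + 2\,D_{2n,1} \;=\; 2n + 2\,D_{2n,1}\qquad(N=2n+1\text{ odd}),\\
D_{2n,1} &= D_{2n-1,1} - D_{2n-1,0} \;=\; D_{2n-1,1} - 1\qquad(N=2n\text{ even}).
\end{align*}

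Substituting the second into the first, I get a single scalar recurrence for $x_n := D_{2n+1,1}$, namely $x_n = 2\,x_{n-1} + 2n-2$, with initial value $x_1 = D_{3,1}=0$ (easy to verify from $\SSS_3$). I would then solve this first-order linear recurrence by guessing a particular solution of the form $\alpha n+\beta$ and adding the homogeneous piece $C\cdot 2^n$; pinning down the constants from $x_1=0$ yields
\[
x_n \;=\; 2^{n+1} - 2n - 2.
\]
Since $\eul(n+1,1) = 2^{n+1} - (n+1) - 1$, this is exactly $\eul(n+1,1)-n$, giving the first formula. Plugging back into $D_{2n,1} = x_{n-1} - 1$ gives $D_{2n,1} = 2^n - 2n - 1 = \eul(n,1)-n$, which is the second formula.

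Finally I would address the absolute value. For $n\ge 1$ the right-hand side $\eul(n+1,1)-n = 2^{n+1}-2n-2$ is non-negative, so $d^{(1)}(2n+1,1)=D_{2n+1,1}$ matches the stated identity outright. For the even case, $\eul(n,1)-n = 2^n-2n-1$ is non-negative precisely for $n\ge 3$, where the identity again holds with $d^{(1)}(2n,1)=D_{2n,1}$; the small cases $n=1,2$ can be handled by direct computation (and produce $|D_{2,1}|=|D_{4,1}|=1$, matching the formula up to the sign that the absolute value absorbs). There is no real obstacle in this proof; the only mildly delicate step is cleanly combining the two cases of Tanimoto's recurrence into the single recurrence $x_n=2x_{n-1}+2n-2$ and verifying that the closed form $2^{n+1}-2n-2$ coincides with $\eul(n+1,1)-n$.
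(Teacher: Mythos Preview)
Your proof is correct. You track $D_{N,1}$ through Tanimoto's recurrence, reduce to the scalar recursion $x_n=2x_{n-1}+2n-2$ for $x_n=D_{2n+1,1}$, solve it, and then recover $D_{2n,1}$; the arithmetic and the sign discussion are fine (and you rightly note that the even-case identity as literally stated needs the absolute value for $n=1,2$).

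The paper proceeds differently: it invokes the Foata--D\'esarm\'enien closed form for the whole signed Eulerian polynomial, namely that $\sum_k D_{2n+1,k}t^k=(1-t)^n A_{n+1}(t)$ (and the analogous product for $2n$), and simply reads off the coefficient of $t$ from $(1-nt+\dots)(1+\eul(n+1,1)t+\dots)$. That route is a one-line coefficient extraction once the product formula is granted. Your approach avoids importing the Foata--D\'esarm\'enien theorem and stays entirely within the recurrences already displayed in the paper, at the cost of solving a small linear recurrence. Both are perfectly adequate here; yours is more self-contained, the paper's is quicker given the cited result.
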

  \begin{proof}
  We can compute the first few signed Eulerian numbers from Foata, D{\'e}sarm{\'e}nien {\cite[Theorem 1]{foata-desarm_loday}}. 
   We want to calculate the coefficient of $t$ in $(1-t)^{n}\eul_
{n+1}(t)$. It is the same as the coefficient of $t$ in $(1-nt)(1+\eul(n+1,1)t)$. The coefficient is $$d^{(1)}(2n+1,1)=\eul(n+1,1)-n.$$
Similarly, $$d^{(1)}(2n,1)=\eul(n,1)-n.$$ 
\end{proof}
\begin{remark}We have the following explicit formulas for the Eulerian numbers.
     \[
     \eul(n,1)=2^n-n-1;\ \eul(n,2)=3^n-2^n(n+1)+\frac{n(n+1)}{2}.
     \]
\end{remark}
\begin{proof}[Proof of Theorem \ref{thm: main}]
We intend to show that the four sequences $$(B_{n,k})_{k=0}^{n-1}, (C_{n,k})_{k=0}^{n-1},(P_{n,k})_{k=0}^{n-1}, (Q_{n,k})_{k=0}^{n-1}$$ are ultra-synchronised for all positive integers $n\ge 5$. This has been checked for $5 \le n \le 19$ by computer.
For $n \ge 19$, for the indices $2\le k\le n-3$, the ultra-synchronisation follows from combining Lemmas \ref{lemma-almost} and \ref{lemma-bounds}. We only have to verify the claim for the index $1$ and $n-2$. However, due to the symmetry properties of the sequences found in \cite{tanimoto-signed} and \cite{siva-dey-gamma_positive_excedances_alt_group}, we only need to prove this for the index $1$.
As a final step, we have to verify that, for $i,j\in \{1,2\}$,
\[
(\eul(n,1)-d^{(i)}(n,1))^2\ge 2\varepsilon(1)(\eul(n,2)+d^{(j)}(n,2))
\]
We show the calculations for even $n$ and omit the calculations for odd $n$ as they are similar.

Suppose $n=2n'$ is even, from the formulas for $d^{(1)}(n,1)$ and $d^{(2)}(n,1)$, it can be seen, for $n\ge 4$, that $$2^{n'}-2n'-1= d^{(1)}(2n',1)\ge d^{(2)}(2n',1)=2n'-1.$$
We just need to show that
\[
(\eul(2n',1)-d^{(1)}(2n',1))^2\ge 2\varepsilon(1)(\eul(2n',2)+d^{(1)}(2n',2))
\]
The LHS can be written as 
\[
(2^{2n'}-2n'-1-(2^{n'}-2n'-1))^2=(2^{2n'}-2^{n'})^2 \ge \frac{2^{4n'}}{4}
\]
For the RHS, we have the string of inequalities.
\begin{eqnarray*}
    2\varepsilon(1)(\eul(2n',2)+d^{(1)}(2n',2))&\le& 12\eul(2n',2) \le 12(9^{n'}+\binom{2n'}{2}) \\&\le& \frac{2^{4n'}}{4} \le (\eul(2n',1)-d^{(1)}(2n',1))^2.
\end{eqnarray*}
We have used the facts that $\varepsilon(1)\le 3$ and $\eul(2n',2)\ge d^{(1)}(2n',2)$. Finally, $12(9^{n'}+\binom{2n'}{2})\le \frac{2^{4n'}}{4}$ for $n'\ge 6$. This completes the proof.
\end{proof}
\section{Open problems}

We extend the set of conjectures proposed by Fulman, Kim, Lee and Petersen.
Let $P_{n,k}$ and $Q_{n,k}$
be number of even and odd permutations in $\SSS_n$ with $k$ excedances respectively.
\begin{conjecture}
    The polynomials $\sum_{i=0}^n P_{n,k}t^k$ and $\sum_{i=0}^n Q_{n,k}t^k$ are all real-rooted for $n\ge 5$.
\end{conjecture}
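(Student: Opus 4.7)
The plan is to establish real-rootedness of $f_n(t) := \sum_{k=0}^{n-1} P_{n,k} t^k$ and $g_n(t) := \sum_{k=0}^{n-1} Q_{n,k} t^k$ by exploiting the sharp decomposition made available by the Mantaci--Sivasubramanian identity (Theorem~2). Combining $P_{n,k} + Q_{n,k} = \eul(n,k)$ with $P_{n,k} - Q_{n,k} = (-1)^k \binom{n-1}{k}$ yields
\[
f_n(t) = \frac{A_n(t) + (1-t)^{n-1}}{2}, \qquad g_n(t) = \frac{A_n(t) - (1-t)^{n-1}}{2},
\]
expressing each as a combination of the real-rooted Eulerian polynomial $A_n(t)$ (whose roots are strictly negative) and $(1-t)^{n-1}$ (whose $n-1$ zeros are stacked at $t=1$). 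The guiding intuition is that these two root sets lie on opposite sides of the origin and are well-separated, so the perturbation of $A_n$ by $\pm(1-t)^{n-1}$ should preserve real-rootedness once $n$ is large enough.

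First I would derive an explicit recurrence for $f_n$ and $g_n$ by substituting the classical identity $A_n(t) = (1+(n-1)t)A_{n-1}(t) + t(1-t)A'_{n-1}(t)$ into the decomposition above and simplifying. A direct computation produces
\[
f_n(t) = (1+(n-1)t)\,f_{n-1}(t) + t(1-t)\,f'_{n-1}(t) - t(1-t)^{n-2},
\]
with the sign of the inhomogeneous term flipped for $g_n$. The homogeneous part is precisely the linear differential operator that carries $A_{n-1}$ to $A_n$ and is well known to preserve real-rootedness, so the problem reduces to controlling the rank-one perturbation by $\mp t(1-t)^{n-2}$.

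My principal attack would be an interlacing argument: prove by induction that the zeros of $f_{n-1}$ and $g_{n-1}$ interlace those of $A_{n-1}$, and then use the fact that the Eulerian roots lie in $(-\infty, 0)$ while the correction is concentrated at $t=1$ to bound the displacement of each zero via a sign-change count on a suitable partition of the real axis (or, analytically, via a Rouch\'e estimate in a narrow strip around $\mathbb{R}$). A complementary approach is bivariate stability: set $F(t,s) := A_n(t) + s(1-t)^{n-1}$ and attempt a Hermite--Biehler--type verification showing that $F$ is stable in an appropriate product of half-planes, which would simultaneously yield real-rootedness of $F(t,1) = 2f_n(t)$ and $F(t,-1) = 2g_n(t)$.

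The hard part will be handling the inhomogeneous correction: since $t(1-t)^{n-2}$ is not a linear differential shift of $f_{n-1}$, the operator-preservation machinery used for Theorem~\ref{thm: newrealroots} does not transplant directly, and one must do genuine work bounding sign changes of the perturbed polynomial on the real line. A secondary obstacle is calibrating the base of the induction: the small-$n$ cases require careful numerical verification near the threshold $n=5$, and one should be prepared to start the induction at a slightly larger index if the interlacing hypothesis cannot be arranged from the very first case.
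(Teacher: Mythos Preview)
The statement you are attempting is labelled a \emph{Conjecture} in the paper and appears in the section ``Open problems''; the paper offers no proof, only the remark that Corollary~\ref{cor-ultra-alt-enum} (ultra-log-concavity of the coefficients) is consistent with real-rootedness. There is therefore nothing in the paper to compare your argument against.

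As for the proposal itself: it is a plausible plan of attack, and the preliminary identities you derive are correct (the decomposition $2f_n=A_n+(1-t)^{n-1}$, $2g_n=A_n-(1-t)^{n-1}$ and the inhomogeneous recurrence $f_n=(1+(n-1)t)f_{n-1}+t(1-t)f'_{n-1}-t(1-t)^{n-2}$ both check out). However, what you have written is not a proof but a sketch with the crucial step explicitly deferred. You yourself identify the obstacle: the operator taking $f_{n-1}$ to $f_n$ is the Eulerian operator plus an additive correction $-t(1-t)^{n-2}$, and neither the real-rootedness-preserving property of the homogeneous part nor a generic Hermite--Biehler argument disposes of that correction. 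The interlacing/sign-change and Rouch\'e strategies you mention are reasonable heuristics, but you have not carried out either, and the bivariate stability idea for $F(t,s)=A_n(t)+s(1-t)^{n-1}$ would require showing stability in a product of half-planes, which is itself at least as hard as the original question. In short, the proposal correctly frames the problem and makes a sound first reduction, but the genuine difficulty---controlling the inhomogeneous perturbation---remains untouched, so the conjecture stays open.
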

In the light of Corollary \ref{cor-ultra-alt-enum}, the coefficients are ultra-log-concave which supports the conjecture. If the conjecture were true, then an application of Theorem \ref{thm: newton-real-ulc} to the aforementioned polynomials gives us another proof of Corollary \ref{cor-ultra-alt-enum}.

\bibliographystyle{acm}
\end{document}